\newcommand{\lra}{\longrightarrow}
\newcommand{\os}{\overset}
\newcommand{\Hom}{\operatorname{Hom}}
\newcommand{\Ext}{\operatorname{Ext}}
\newenvironment{dedication}
{\vspace{6ex}\begin{quotation}\begin{center}\begin{em}}
			{\par\end{em}\end{center}\end{quotation}}
\theoremstyle{dgthm}
\newtheorem{theorem}{Theorem}
\newtheorem{corollary}{Corollary}
\newtheorem{lemma}{Lemma}
\theoremstyle{dgdef}
\begin{document}
\begin{dedication}
	\hspace{\fill}
\end{dedication}

\title{Strong Shape Invariance of Alexander-Spanier Normal Homology Theory}

\author{Anzor Beridze}

\address{Department of Mathematics,
Faculty of exact sciences and education,
	Batumi Shota Rustaveli State University,
	35, Ninoshvili St., Batumi,
	Georgia; e-mail:~a.beridze@bsu.edu.ge}


\begin{abstract}  In the paper [Ba-Be-Mdz], using the Alexander-Spanier cochains based on the normal coverings, the exact homology theory $\bar{H}^N_*(-,-;G)$, the so called Alexander-Spanier normal homology theory, is defined. In the paper, we use the method of	construction of the strong homology theory to show that the homology theory $\bar{H}^N_*(-,-;G)$ is a strong shape invariant.
\end{abstract}

\begin{keyword}
Universal Coefficient Formula; Continuity of exact homology; Steenrod homology; Alexander-Spanier normal homology; Strong homology; Strong shape invariant; Total complex.
\MSC 55N07, 55N05
\end{keyword}

\maketitle

\section*{\bf Introduction} 
 {\color{black} On the category of pairs of compact metric spaces, an exact homology theory, the so called Steenrod homology theory, was defined  by N. Steenrod \cite{St}. It was extended to the category of pairs of compact spaces and on the subcategory of pairs of compact metric spaces axiomatically described by J.Milnor. Later, there were many approaches to define the exact homology theories  \cite{BM},\cite{Mas1},\cite{Mas2},\cite{Ku}, \cite{Skl}.  However, these approaches gave the unique homology theory on the category pairs of compact Hausdorff spaces \cite{Ku}, \cite{Skl}, \cite{Be},\cite{IM}, \cite{Mdz}, \cite{In}, \cite{BM2}. For general topological spaces,  an exact homology theory was constructed in \cite{Mdz2}. According to Mardesic \cite{Mar}, these groups are to be called 1-height homology groups (\S 17.1, p. 333, \cite{Mar}). What today is called the strong (not finite height)  homology groups, which is strong shape invariant, was defined a bit later \cite{LM1}, \cite{LM2}, \cite{LM3}, \cite{Mim}. However, in the paper \cite{Mdz} these homology groups are called the total homology groups (see bibliographic notes of \S 17 in \cite{Mar}). Note that, in some sense, the finite height homology groups can be considered as an approximation of the strong homology groups \cite{Mar}. In the paper \cite{Mdz2},  these groups are simply called fragments of the total homology groups. The essential formulas, which give the connection of strong (total) homology  and finite height (fragment) homology groups are the sequences:
 \[
 \begin{tikzpicture}
 	\node (A) {$\dots$};
 	\node (B) [node distance=1.4cm, right of=A] {$  \bar{H}^{(r-2)} _{n+1}(\mathbf{C} )$};
 	\node (C) [node distance=2.2cm, right of=B] {${\varprojlim} ^ {r}  H_{n+r}(\mathbf{C})$};
 	\node (D) [node distance=4.2cm, right of=B] {$ \bar{H}^{(r)}_n(\mathbf{C})$};
 	\node (E) [node distance=1.9cm, right of=D] {$ \bar{H}^{(r-1)}_n(\mathbf{C})$};
 	\node (F) [node distance=2.4cm, right of=E] {${\varprojlim}^{r+1}  {H} _{n+r}(\mathbf{C}) $};
 	\node (H) [node distance=1.9cm, right of=F] {$\dots$,};

 	\draw[->] (A) to node [above]{}(B);
 	\draw[->] (B) to node [above]{}(C);
 	\draw[->] (C) to node [above]{}(D);
 	\draw[->] (D) to node [above]{}(E);
 	\draw[->] (E) to node [above]{}(F);
 	\draw[->] (F) to node [above]{}(H);
 	
 \end{tikzpicture}
 \]
 \[\footnotesize
 0 \to {\varprojlim}^1 \bar{H}^{(*)}_{n+1}(\mathbf{C}) \xrightarrow{\text{{  }}} \bar{H}_n(\mathbf{C}) \xrightarrow{\text{}} \varprojlim \bar{H}^{(*)}_n(\mathbf{C}) \to 0.
 \]
 The announcement about these formulas was in  \cite{Mim}, but  the proofs for the first time appeared in  \cite{Mdz}, \cite{Mdz2}, \cite{MM}, \cite{MP2} (see bibliographic notes of \S 17 in \cite{Mar}).	Using these formulas, for the strong homology theory the Milnor's short sequence \cite[Theorem 2]{MP2} and  the universal coefficient formulas \cite[Theorem 4]{MP2} are obtained  on the category $\mathcal{K}^2_{C}$ of compact pairs. In particular:
 \begin{itemize}
 	\item it is shown that for every compact Hausdorff space $X$, abelian group $G$ and integer $n \in \mathbb{N}$, there exists an exact sequence
 	\[\footnotesize
 	0 \to {\varprojlim}^1 {H}_{n+1}(\mathbf{X};G) \xrightarrow{\text{{  }}} \bar{H}_n(X;G) \xrightarrow{\text{}} \varprojlim {H}_n(\mathbf{X};G) \to 0.
 	\]
 	which is natural in both variables. Here $\mathbf{X}=\{X_\alpha, p_{\alpha \alpha'}, \Lambda\}$  is an inverse system of compact polyhedra with $\varprojlim \mathbf{X}=X$ \cite{MP2}. Moreover, it is shown that there exists a paracompact space $X$ such that $\check{H}(X;G)= \varprojlim {H}_n(\mathbf{X};Z)=0$ and  ${H}_{n+1}(\mathbf{X};Z)=0$, but $\bar{H}_n(X;G) \neq 0$ \cite[Remark 3]{MP2}. Therefore, the formula is not valid for paracompact spaces.
 	\item For every  compact Hausdorff space $X$, abelian group $G$ and integer $n \in \mathbb{N}$, there exists an exact sequence
 	\[0 \to Ext(\check{H}^{n+1}(X);G) \to \bar{H}_n(X;G) \to Hom(\check{H}^n(X);G) \to 0,\]
 	which is natural in both variables, where $\check{H}^{n+1}(X)$ is Alexandrov-\v{C}ech cohomology \cite{MP2}. An example is given, which shows that for the strong homology theory  $\bar{H}_{*}(-,-;G)$, the Universal Coefficient Formula is false even for polyhedrons \cite[Remark 6]{MP2}. Indeed, if $X=\vee _i X_i,$ where $X_i=P^2$ are projective planes and $G=\mathbb{Z}_2,$ then
 	\[\check{H}^3(X;G)=0,~~\bar{H}_2(X;G)=\oplus_i\mathbb{Z}_2,~~\check{H}^2(X;G)=\prod_i \mathbb{Z}_2 .\]
 	Therefore, since $\oplus_i\mathbb{Z}_2 \not \simeq Hom \left(\prod_i \mathbb{Z}_2 , \mathbb{Z}_2 \right)$ the universal coefficient formula is not fulfilled.  
 \end{itemize}
Therefore, by the uniqueness theorems obtained by Berikashvili, and as well as Inasaridze and Mdzinarishvili, the strong homology theory is  isomorphic to the Steenrod homology theory for pairs of compact topological spaces \cite{Be}, \cite{IM}, \cite{BM2}.

Recently, another exact homology theory  on the category $\mathcal{K}^2_{Top}$ of closed P-pairs \cite{MS} of general topological spaces, the so called the Alexander-Spanier normal homology theory $\bar{H}^N_*(-,-;G)$, has been constructed  by the paper \cite{BBM}, which satisfies the Universal Coefficient Formula for any topological spaces \cite[Corrolary 9]{BBM}. Therefore, by the paper \cite{Be},  the Alexander-Spanier normal homology theory and the strong homology theories are isomorphic on the subcategory of pairs of compact topological spaces. On the other hand, by the Uniqueness Theorem \cite[Theorem 7]{BBM}, \cite[Corollary 10]{BBM}, since for the strong homology theory the Universal Coefficient Formula is false in general, we can conclude that  the Alexander-Spanier normal homology theory and the strong homology theory are different for non-compact spaces. Therefore, it is interesting to investigate properties of the homology theory $\bar{H}^N_*(-,-;G)$. Partially, it is done in the paper \cite{BBM}. In particular, it is shown that the Alexander-Spanier normal homology theory satisfies the Eilenberg-Steenrod axioms except of dimension axiom in the general case. Moreover, it is shown that the following properties are fulfilled:}

  $CEH$ ({\it Continuity for an Exact Homology}): For each resolution  ${\bf p}:(X,A) \to {\bf (X,A)}=\left\{(X_\lambda, A_\lambda), p_{\lambda \lambda '}, \Lambda \right\} $ of closed $P$-pair $(X,A) \in \mathcal{K}^2_{Top}$ and an abelian
group $G$, there exists a functorial long exact sequence:
\begin{equation}\label{eq1}
	\begin{tikzpicture}\small
		
		\node (A) {$\dots$};
		\node (B) [node distance=2.2cm, right of=A] {$ {\varprojlim} ^ {(3)}   \bar{H}^N _{n+2}(X_\lambda, A_\lambda;G) $};
		\node (C) [node distance=3.2cm, right of=B] {${\varprojlim} ^ {(1)}  \bar{H}^N_{n+1}(X_\lambda,A_\lambda;G)$};
		\node (D) [node distance=2.6cm, right of=C] {$ \bar{H}^N_n(X,A;G)$};
		\node (E) [node distance=2.3cm, right of=D] {${\varprojlim}  \bar{H}^N _{n}(X_\lambda,A_\lambda;G) $};
		\node (F) [node distance=3cm, right of=E] {${\varprojlim} ^ {(2)} \bar{H}^N_ {n+1}(X_\lambda,A_\lambda;G) $};
		\node (H) [node distance=2.2cm, right of=F] {$\dots$~.};
		
		\draw[->] (A) to node [above]{}(B);
		\draw[->] (B) to node [above]{}(C);
		\draw[->] (C) to node [above]{}(D);
		\draw[->] (D) to node [above]{}(E);
		\draw[->] (E) to node [above]{}(F);
		\draw[->] (F) to node [above]{}(H);
		
	\end{tikzpicture}
\end{equation}

$CIG$ ({\it Continuity for an Injective Group}): For each resolution  ${\bf p}:(X,A) \to {\bf (X,A)}=\left\{(X_\lambda, A_\lambda), p_{\lambda \lambda }, \Lambda \right\} $ of closed $P$-pair $(X,A) \in \mathcal{K}^2_{Top}$ and  an injective abelian group $I$, there exists an isomorphism
\begin{equation}\label{eq2}
	\bar{H}^N_n(X,A;I) \approx \varprojlim \bar{H}^N_n(X_\lambda,A_\lambda ;I).
\end{equation}

$UCF$ ({\it Universal Coefficient Formula}): For each $(X,A) \in \mathcal{K}^2_{Top}$ closed $P$-pair and an abelian group $G$, there exists a functorial exact sequence:
\begin{equation}\label{eq3}
	0 \lra \Ext\left(\bar{H}^{n+1}_N(X,A);G\right) \os{}{\lra}  \bar{H}^N_n\left(X,A;G\right) \os{}{\lra} \Hom(\bar{H}^n_N\left(X,A);G\right) \lra 0,
\end{equation}
where $	\bar{H}^{n+1}_N(-,-;G)$ is the Alexander-Spanier normal cohomology \cite{BBM}, \cite{Wat}.

Our aim is to investigate whether the homology theory $\bar{H}^N_{*}(-,-;G)$ is a strong shape invariant {\color{black} or not}. In the paper, we use the method of construction of the strong homology theory to obtain a new strong shape invariant homology theory $\bar{H}^{\infty}_{*}(-,-;G).$ In particular, instead of the singular chain complexes, we use the chain complexes that are constructed in the paper \cite{BM3}, and follow the method of construction of the strong homology theory. {\color{black} Clearly, we obtained that	homology theory} $\bar{H}_{*}^\infty(-,-;G)$  is  a strong shape invariant. Then we prove that the homology theories $\bar{H}_{*}^\infty(-,-;G)$  and $\bar{H}_{*}^N(-,-;G)$  are isomorphic. Therefore, we obtain that the Alexander-Spanier Normal homology theory is a new strong shape invariant exact homology theory.

\section{Homology theory of pro-chain complexes}

In this section, using the chain complexes obtained by the cochain complexes \cite{BM3}, we construct a new strong shape invariant homology $\bar{H}_{*}^\infty(-,-;G)$  groups of a pro-chain complexes. The method of construction is the standard that is considered in \cite{Mar}, \cite{LM1}, \cite{Mdz}, but we recall and review some of them which are important for our purpose.

Let $C^*$ be a cochain complex  and $0 \to G \os{\alpha}{\lra} I^0 \os{\beta}{\lra} I^1 \to 0 $ be an injective resolution of a group $G$. Let $\beta_{\#}:\Hom(C^*;I^0) \to \Hom(C^*;I^1)$ be the chain map induced by $\beta: I^0 \to I^1$. Consider the cone  $C_*(\beta_{\#})=\left\{ C_n(\beta_{\#}), \partial \right\}=\left\{\Hom( C^*,\beta_{\#}), \partial \right\}$ of the chain map $\beta_{\#}$, i.e.
\begin{equation}\label{eq4}
	C_n(\beta_{\#}) \simeq \Hom(C^{n};I^0) \oplus \Hom(C^{n+1};I^1),
\end{equation}
\begin{equation}\label{eq5}
	\partial(\varphi ',\varphi '')=(\varphi ' \circ \delta, \beta \circ \varphi ' -\varphi '' \circ \delta), ~~~ \forall (\varphi ',\varphi '') \in C_n(\beta_{\#}).
\end{equation}
If $f^\#:C^* \lra C'^*$ is a cochain map, then it induces the chain map $f_\#:C_* (\beta'_{\#}) \lra C_*(\beta_{\#})$. In particular, for each $n \in \mathbb{Z}$,  ${f}_n: C_n (\beta'_{\#}) \lra C_n (\beta_{\#})$ is defined by the formula
${f}_n(\varphi ', \varphi '')=(\varphi ' \circ f^n , \varphi '' \circ f^{n+1})$ \cite{BM3}.  

Let ${\bf C}^*=\left\{C^*_\lambda, p_{\lambda \lambda'}, \Lambda\right\}$ be a direct system of cochain complexes. For each abelian group $G$, consider the inverse system ${\bf C}_*=\left\{C^\lambda_*, p_{\lambda \lambda'}, \Lambda\right\}$ of chain complexes $C^\lambda_*=C_*^\lambda(\beta_\#)$. Consider  the total complex $T_*({\bf C}_*)$ \cite{Mar}, i.e.  
\begin{equation}\label{eq6}
	T_n({\bf C}_*)=\prod_{s=0}^{\infty } \prod_{(\lambda_0,\lambda_1, \dots, \lambda_s)}  C^{\lambda_0}_{n+s}=\prod_{s=0}^{\infty } \prod_{{\boldsymbol \lambda} \in \Lambda_s}  C^{\lambda_0}_{n+s}.
\end{equation}
A chain $c = (c_0, c_1, \dots )\in T_n({\bf C}_*)$ is a function which assigns the
element $c({\boldsymbol \lambda})=c_s({\boldsymbol \lambda}) \in C^{\lambda_0}_{n+s}$ to each multiindex ${\boldsymbol \lambda} =(\lambda_0,\lambda_1, \dots \lambda_s) \in \Lambda_s,~s \ge 0.$  The boundary operator $d_n:T_n({\bf C}_*) \to 
T_{n-1}({\bf C}_*)$ is defined by the formula 
\begin{equation}\label{eq7}
	d_n(c)=\partial_n(c)+(-1)^n\delta^n(c),
\end{equation}
 where
\begin{equation}\label{eq8}
	\left( \partial_n(c)\right)({\boldsymbol \lambda})=\partial^{\lambda_0}_{n+s} \left(c({\boldsymbol \lambda})\right),
\end{equation}
 \begin{equation}\label{eq9}
 	\left( \delta_n(c)\right)({\boldsymbol \lambda})=p^{\lambda_0 \lambda_1}_{n+s}c(d^0{\boldsymbol \lambda})\ +\sum_{j=1}^s (-1)^jc(d^j {\boldsymbol \lambda}). 
 \end{equation}
 Note that $d^j {\boldsymbol \lambda} =d^j(\lambda_0,\dots, \lambda_{j-1},\lambda_j,\lambda_{j+1}, \dots , \lambda_s)=(\lambda_0,\dots, \lambda_{j-1},\lambda_{j+1}, \dots , \lambda_s)$, $j=0,1, \dots , s$.
 
 It is known that $T_*\left({\bf C}_*\right)=\left(T_*\left({\bf C}_*\right), d_n\right)$ is a chain complex \cite[Lemma 17.2]{Mar}. Therefore, we define a homology groups of the inverse system ${\bf C}_*$ as the homology groups of the chain complex $T_*\left({\bf C}_*\right)$, i.e.
  \begin{equation}\label{eq10}
 	\bar{H}^{\infty}_*\left({\bf C}_*\right)=H_*\left(T_*\left({\bf C}_*\right)\right).
 \end{equation}
Let ${\bf C}^*=\left\{C^*_\lambda, p_{\lambda \lambda'}, \Lambda\right\}$ and ${\bf D}^*=\left\{D^*_\mu, q_{\mu \mu'}, M \right\}$ be direct systems of cochain complexes and $f^{\#} =(f^{\#}_\lambda, f) : {\bf C}^* \to {\bf D}^*$ be a morphism of direct systems. Consider the corresponding mappings of inverse systems $f_{\#} =(f_{\#}^\lambda, f) : {\bf D}_* \to {\bf C}_*$. Then, according to \cite[\S 17.3]{Mar}, we define the induced chain map  $f_{\#} : T_* \left({\bf D}_*\right) \to  T_* \left({\bf C}_*\right)$  that induces a homomorphism of homology groups \cite[Lemma 17.5]{Mar}:
  \begin{equation}\label{eq10.1}
	f_{*} : \bar{H}^{\infty}_* \left({\bf D}_*\right) \to  \bar{H}^{\infty}_* \left({\bf C}_*\right).
\end{equation}
Consider the truncation $T^{(r)}_*\left({\bf C}_*\right)$ of the complex $T_*\left({\bf C}_*\right)$ at height $r \ge 0$, i.e. 
 \begin{equation}\label{eq11}
  T^{(r)}_n({\bf C}_*)=\prod_{s=0}^{r } \prod_{(\lambda_0,\lambda_1, \dots, \lambda_s)}  C^{\lambda_0}_{n+s}=\prod_{s=0}^{r } \prod_{{\boldsymbol \lambda} \in \Lambda_s}  C^{\lambda_0}_{n+s}.
\end{equation}
The boundary operator $d^{(r)}_n:T^{(r)}_n({\bf C}_*) \to 
T^{(r)}_{n-1}({\bf C}_*)$ {\color{black} is defined in a similar way
	as}   $d_n:T_n({\bf C}_*) \to 
T_{n-1}({\bf C}_*)$. Let 
  \begin{equation}\label{eq12}
 \pi^{(r)}_n :T^{(r)}_n\left({\bf C}_*\right) \to  T^{(r-1)}_{n}\left({\bf C}_*\right), ~~r \ge 1 
 \end{equation}
 be the mapping given by
   \begin{equation}\label{eq13}
 	\pi^{(r)}_n\left(c_0,c_1, \dots, c_r\right)=\left(c_0, c_1, \dots, c_{r-1}\right).
 \end{equation}
Then $\pi^{(r)}_\#:T^{(r)}_*\left({\bf C}_*\right) \to T^{(r-1)}_*\left({\bf C}_*\right)$ is a chain mapping and so, it induces the homomorphism $\pi^{(r)}_n:H_n\left(T^{(r)}_*\left({\bf C}_*\right) \right) \to H_n \left(T^{(r-1)}_*\left({\bf C}_*\right)\right)$ \cite[\S 17.1]{Mar}. For each $r \ge 0$ define the homology group of height $r$ by
   \begin{equation}\label{eq14}
	\bar{H}^{(r)}_n\left({\bf C}_*\right) ={\pi}^{(r+1)}_n\left(H_n\left(T^{(r+1)}_*\left({\bf C}_*\right) \right)    \right) \subset H_n\left(T^{(r)}_*\left({\bf C}_*\right) \right).
\end{equation}
It is known that the homomorphism ${\pi}^{(r)}_n:H_n \left(T^{(r)}_*\left({\bf C}_*\right)\right) \to H_n \left(T^{(r-1)}_*\left({\bf C}_*\right)\right)$ induces the homomorphism
  \begin{equation}\label{eq15}
   	\bar{\pi}^{(r)}_n:\bar{H}^{(r)}_n\left({\bf C}_*\right) \to \bar{H}^{(r-1)}_n\left({\bf C}_*\right).
   \end{equation}
Consequently, we obtain the inverse sequence \cite[\S 17.3]{Mar} of homology groups:
  \begin{equation}\label{eq16}
	\bar{H}^{(*)}_n\left({\bf C}_*\right)= \left(\bar{H}^{(0)}_n\left({\bf C}_*\right)   \os{\bar{\pi}^{(1)}_n}{\leftarrow}     \dots  \os{\bar{\pi}^{(r)}_n}{\leftarrow}  \bar{H}^{(r)}_n\left({\bf C}_*\right)  \os{\bar{\pi}^{(r+1)}_n}{\leftarrow} \bar{H}^{(r+2)}_n\left({\bf C}_*\right)  \os{\bar{\pi}^{(r+2)}_n}{\leftarrow}   \dots   \right).
\end{equation}
{\color{black}The same way can be defined for the natural} projection mappings
  \begin{equation}\label{eq17}
	\pi^{(r)\infty}_\# :T_*\left({\bf C}_*\right) \to  T^{(r)}_{*}\left({\bf C}_*\right), ~~r \ge 0, 
\end{equation}
which induce the mappings
  \begin{equation}\label{eq18}
	\bar{\pi}^{(r)\infty}_n :\bar{H}^\infty_n\left({\bf C}_*\right) \to  \bar{H}^{(r)}_{n}\left({\bf C}_*\right), ~~r \ge 0.
\end{equation}
It is known that the homomorphisms $\bar{\pi}^{(r)\infty}_n$ induce an epimorphism $\bar{\pi}^\infty_n:\bar{H}^\infty\left({\bf C}_*\right)  \os{}{\rightarrow}  \varprojlim \bar{H}^{(r)}_n\left( {\bf C}_*\right)$ and its kernel is ${\varprojlim}^1 H^{(r)}_n\left( {\bf C}_*\right)$, i.e. there is a short exact sequence \cite[Theorem 17.11]{Mar}:
  \begin{equation}\label{eq19}
	0 \os{}{\rightarrow}  {\varprojlim}^{(1)} \bar{H}^{(r)}_{n+1}\left( {\bf C}_*\right) \os{}{\rightarrow}   \bar{H}^{\infty}_n\left( {\bf C}_*\right) \os{\bar{\pi}^{\infty}_n}{\rightarrow} {\varprojlim} \bar{H}^{(r)}_n\left( {\bf C}_*\right) \os{}{\leftarrow} 0.   
\end{equation}

\begin{lemma}\label{lem.1} 
	 For each direct system ${\bf C}^*=\left\{C^*_\lambda, p_{\lambda \lambda'}, \Lambda\right\}$  of cochain complexes and an injective abelian group $I$, the homomorphisms 
	  \begin{equation}\label{eq20}
		\bar{\pi}^{(r)\infty}_n :\bar{H}^\infty_n\left({\bf C}_*\right) \to  \bar{H}^{(r)}_{n}\left({\bf C}_*\right), ~~r \ge 0
	\end{equation}
are isomorphisms, where  ${\bf C}_*=\left\{C^\lambda_*, p_{\lambda \lambda'}, \Lambda\right\}$ is the inverse system of chain complexes $C^\lambda_*=C_*^\lambda \left(\beta_\#\right)$.
\end{lemma}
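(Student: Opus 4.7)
The plan exploits injectivity of $I$ at two points: to simplify the cone complex defining ${\bf C}_*$, and to force vanishing of all positive derived limits of the resulting pro-homology group. Taking the injective resolution $0 \to I \to I \to 0 \to 0$ (so $I^0 = I$, $I^1 = 0$ and $\beta = 0$), formulas (\ref{eq4})--(\ref{eq5}) collapse to $C^\lambda_* \cong \Hom(C^*_\lambda, I)$ with boundary $\varphi \mapsto \varphi \circ \delta$. Since $\Hom(-, I)$ is exact on an injective target, standard dualization gives $H_m(C^\lambda_*) \cong \Hom(H^m(C^*_\lambda), I)$, so the inverse system of homology groups is the $\Hom(-, I)$-dual of the direct system of cohomology groups $\{H^m(C^*_\lambda)\}$.

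The central computation is the vanishing ${\varprojlim}^{(s)} H_m({\bf C}_*) = 0$ for every $s \geq 1$. I would establish this by a Roos-type argument: for any direct system $\{D_\lambda\}_{\lambda \in \Lambda}$ with $\Lambda$ directed, the Roos cochain complex computing ${\varprojlim}^{(*)} \Hom(D_\lambda, I)$ is naturally isomorphic to $\Hom(B_\bullet, I)$, where $B_\bullet$ with $B_s = \bigoplus_{{\boldsymbol \lambda} \in \Lambda_s} D_{\lambda_0}$ is the bar complex resolving $\varinjlim D_\lambda$. Since $\varinjlim$ is exact on directed systems, $B_\bullet$ is a resolution of $\varinjlim D_\lambda$; since $I$ is injective, $\Hom(-, I)$ preserves this acyclicity, yielding the desired vanishing. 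I would record this as a brief sub-lemma, since it is the only place where the injectivity of $I$ is genuinely used.

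Feeding this vanishing into the long exact sequence recalled in the introduction,
\[
\dots \lra {\varprojlim}^{(r)} H_{n+r}({\bf C}_*) \lra \bar{H}^{(r)}_n({\bf C}_*) \lra \bar{H}^{(r-1)}_n({\bf C}_*) \lra {\varprojlim}^{(r+1)} H_{n+r}({\bf C}_*) \lra \dots,
\]
the flanking terms vanish for $r \geq 1$, so each structure map $\bar{\pi}^{(r)}_n$ is an isomorphism. Hence the inverse sequence (\ref{eq16}) is pro-constant, whence ${\varprojlim}^{(1)} \bar{H}^{(r)}_{n+1}({\bf C}_*) = 0$ and $\varprojlim \bar{H}^{(r)}_n({\bf C}_*) \cong \bar{H}^{(r)}_n({\bf C}_*)$ for each $r$. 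Substituting into the short exact sequence (\ref{eq19}) forces $\bar{\pi}^\infty_n$ to be an isomorphism onto $\varprojlim \bar{H}^{(r)}_n({\bf C}_*)$, and composing with the projection to the $r$-th coordinate yields that $\bar{\pi}^{(r)\infty}_n$ is an isomorphism for every $r \geq 0$.

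The principal obstacle is the Roos-duality vanishing; the remainder is a formal diagram chase using the two pieces of machinery already recorded in the introduction and in (\ref{eq19}). An alternative would be to analyze the short exact sequence $0 \to \ker \pi^{(r)\infty}_\# \to T_*({\bf C}_*) \to T^{(r)}_*({\bf C}_*) \to 0$ directly via a spectral sequence of the underlying bicomplex $\bigl\{ \prod_{{\boldsymbol \lambda} \in \Lambda_s} \Hom(C^\bullet_{\lambda_0}, I) \bigr\}_{s \geq 0}$, but this reduces to the same Roos computation and offers no real simplification.
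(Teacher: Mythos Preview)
Your proof is correct and follows the same skeleton as the paper: establish ${\varprojlim}^{(i)} H_n({\bf C}_*)=0$ for $i\ge 1$, feed this into the long exact sequence \cite[Theorem 17.8]{Mar} (the sequence (\ref{eq21}) in the paper) to make each $\bar{\pi}^{(r)}_n$ an isomorphism, and then use (\ref{eq19}) to conclude. The only difference is how the key vanishing is reached. The paper invokes the universal coefficient sequence of \cite[Theorem 1]{BM3} (valid for an arbitrary injective resolution of $I$), kills the $\Ext$ term by injectivity, and then cites \cite[Lemma 1.3]{HM} for ${\varprojlim}^{(i)}\Hom(H^n(C^*_\lambda);I)=0$. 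You instead specialize to the trivial resolution $0\to I\to I\to 0$, which collapses $C^\lambda_*$ to $\Hom(C^*_\lambda,I)$ outright, and then prove the Roos-duality vanishing yourself rather than citing \cite{HM}. Your route is more self-contained and avoids the UCF citation; the paper's route has the advantage of applying uniformly to whatever injective resolution was used to form ${\bf C}_*(\beta_\#)$, so if the ambient resolution is not assumed to be the trivial one you should add a word that the construction is independent of that choice (or simply keep the UCF step as in the paper).
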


\begin{proof}  By \cite[Theorem 17.8]{Mar}, for each pro-chain complexes and $n \in \mathbb{Z}$, there exists an exact sequence
  \begin{equation}\label{eq21}
	\dots  \os{}{\rightarrow}  \bar{H}^{(r-2)}_{n+1}\left( {\bf C}_*\right)  \os{}{\rightarrow} {\varprojlim}^{(r)} {H}_{n+r}\left( {\bf C}_*\right) \os{}{\rightarrow}   \bar{H}^{(r)}_n\left( {\bf C}_*\right) \os{\bar{\pi}^{r}_n}{\rightarrow}  \bar{H}^{(r-1)}_n\left( {\bf C}_*\right) \os{}{\rightarrow} {\varprojlim}^{(r+1)} {H}_{n+r}\left( {\bf C}_*\right) \os{}{\rightarrow} \dots~.
\end{equation}	
	Let's show that
  \begin{equation}\label{eq22}
 {\varprojlim}^{(i)} {H}_n \left( {\bf C}_*\right)={\varprojlim}^{(i)} H_n \left( C_*^\lambda\right)=0,~i\ge1.
\end{equation}	
Indeed, by \cite[Theorem 1]{BM3}, for each polyhedron  $\lambda \in \Lambda$ we have the sequence:
\begin{equation}\label{eq23}
	0 \to \Ext\left({H}^{n+1}(C^*_\lambda);G\right) \to \bar{H}_n (C^*_\lambda;G) \to \Hom\left({H}^n(C^*_\lambda);G\right) \to 0,
\end{equation}
where $\bar{H}_n\left(C^*_\lambda;G\right)=H_n\left(C_*^\lambda\left(\beta_\#\right)\right)$. The sequence (\ref{eq23}) induces the long exact sequence: 
\begin{equation}\label{eq24}
	\begin{tikzpicture}
		\node (A) {0};
		\node (B) [node distance=3cm, right of=A] {$\varprojlim \Ext\left({H}^{n+1}(C^*_\lambda),G\right)$};
		\node (C) [node distance=4cm, right of=B] {$\varprojlim \bar{H}_n (C_\lambda^*;G)$};
		\node (D) [node distance=4cm, right of=C] {$\varprojlim \Hom\left({H}^n(C^*_\lambda),G\right)$};
		\node (E) [node distance=3cm, right of=D] {};
		
		\draw[->] (A) to node [above]{}(B);
		\draw[->] (B) to node [above]{}(C);
		\draw[->] (C) to node [above]{}(D);
		\draw[->] (D) to node [above]{}(E);
		
		\node (A1) [node distance=1cm, below of=A] {};
		\node (B1) [node distance=1cm, below of=B] {${\varprojlim}^{(1)} \Ext\left({H}^{n+1}(C^*_\lambda),G\right)$};
		\node (C1) [node distance=1cm, below of=C] {${\varprojlim}^{(1)} \bar{H}_n (C_\lambda^*;G)$};
		\node (D1) [node distance=1cm, below of=D] {${\varprojlim}^{(1)} \Hom\left({H}^n(C^*_\lambda),G\right)$};
		\node (E1) [node distance=1cm, below of=E] {};
		
		\draw[->] (A1) to node [above]{}(B1);
		\draw[->] (B1) to node [above]{}(C1);
		\draw[->] (C1) to node [above]{}(D1);
		\draw[->] (D1) to node [above]{}(E1);
		
		\node (A2) [node distance=1cm, below of=A1] {};
		\node (B2) [node distance=1cm, below of=B1] {${\varprojlim}^{(2)} \Ext\left({H}^{n+1}(C^*_\lambda),G\right)$};
		\node (C2) [node distance=1cm, below of=C1] {${\varprojlim}^{(2)} \bar{H}_n (C_\lambda^*;G)$};
		\node (D2) [node distance=1cm, below of=D1] {${\varprojlim}^{(2)} \Hom\left({H}^n(C^*_\lambda),G\right)$};
		\node (E2) [node distance=1cm, below of=E1] {$\dots ~.$};
		
		\draw[->] (A2) to node [above]{}(B2);
		\draw[->] (B2) to node [above]{}(C2);
		\draw[->] (C2) to node [above]{}(D2);
		\draw[->] (D2) to node [above]{}(E2);
		
	\end{tikzpicture}
\end{equation}
For each injective abelian group $I,$ the functor $\Ext(-;I)$ is trivial and so, there is an isomorphism:
\begin{equation}\label{eq25}
	{\varprojlim}^{(i)} \bar{H}_n (C_\lambda^*;I) \simeq {\varprojlim}^{(i)} \Hom\left(H^n(C^*_\lambda);I\right), ~~~i \ge 0.
\end{equation}
On the other hand, by \cite[Lemma 1.3]{HM}, for each  injective abelian group $I$ we obtain:
\begin{equation}\label{eq27}
	~~ {\varprojlim}^{i} \Hom\left(H^{n}(C^*_\lambda);I\right)=0,~~~i \ge 1.
\end{equation}
Therefore, by (\ref{eq25})  and (\ref{eq27}), we obtain that (\ref{eq22}) is fulfilled. Therefore, in  the sequence (\ref{eq21}), the mappings $\bar{\pi}^{(r)}_n, ~~r\ge 1$  are isomorphisms. Consequently, in the sequence (\ref{eq19}) the group ${\varprojlim}^1 \bar{H}^{(r)}_n\left( {\bf C}_*\right)$ is trivial. Hence, the mapping
  \begin{equation}\label{eq28}
	   \bar{\pi}^{\infty}_n: \bar{H}^{\infty}_n\left( {\bf C}_*\right) \os{}{\rightarrow} {\varprojlim} \bar{H}^{(r)}_n\left( {\bf C}_*\right)   
\end{equation}
is isomorphism. Since, for each injective abelian group $I$ and $r \ge 1$, the mappings $\bar{\pi}^{(r)}_n:\bar{H}^{(r)}_n\left( {\bf C}_*\right) \os{}{\rightarrow} {\varprojlim} \bar{H}^{(r-1)}_n\left( {\bf C}_*\right)$ are isomorphisms, by (\ref{eq19}), the mappings 
  \begin{equation}\label{eq29}
	\bar{\pi}^{(r)\infty}_n:\bar{H}^{\infty}_n\left( {\bf C}_*\right) \os{}{\rightarrow}  \bar{H}^{(r)}_n\left( {\bf C}_*\right),~r\ge 0   
\end{equation}
are isomorphisms as well.
\end{proof}

\begin{corollary}\label{Cor.5}  For each direct system ${\bf C}^*=\left\{C^*_\lambda, p_{\lambda \lambda'}, \Lambda\right\}$   of cochain complexes and an injective abelian group $I$, the mapping 
  \begin{equation}\label{eq30}
	\tilde{\pi}^{(0)\infty}_n :\bar{H}^\infty_n\left({\bf C}_*\right) \to  \varprojlim {H}^{}_{n}\left( C_*^\lambda \right),
\end{equation}
given by 
  \begin{equation}\label{eq31}
	\tilde{\pi}^{(0)\infty}_n \left(\left[z_0,z_1, \dots \right]\right)=\left\{\left[z_0^\lambda\right]\right\}
\end{equation}
is an isomorphism.
\end{corollary}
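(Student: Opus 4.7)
The plan is to identify the target $\varprojlim H_n(C_*^\lambda)$ with the height-zero homology group $\bar{H}^{(0)}_n({\bf C}_*)$ and then invoke Lemma~\ref{lem.1} for $r=0$. Note that the injectivity of $I$ enters only through Lemma~\ref{lem.1}; the identification of $\bar{H}^{(0)}_n$ with the inverse limit will hold for any coefficient group.

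\textbf{Step 1: Compute $H_n(T^{(0)}_*({\bf C}_*))$.} By (\ref{eq11}), $T^{(0)}_n({\bf C}_*)=\prod_{\lambda\in\Lambda_0}C^{\lambda}_n$. Because the face operators $d^j$ in (\ref{eq9}) require $s\ge 1$, the $\delta$-part of the boundary vanishes on $T^{(0)}_*$, so $d^{(0)}_n$ is simply the product of the boundaries $\partial^{\lambda}_n$. Since homology commutes with products of chain complexes, we obtain
\[
H_n\bigl(T^{(0)}_*({\bf C}_*)\bigr)\;\cong\;\prod_{\lambda\in\Lambda}H_n\bigl(C_*^\lambda\bigr).
\]

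\textbf{Step 2: Identify $\bar{H}^{(0)}_n({\bf C}_*)$ with $\varprojlim H_n(C_*^\lambda)$.} By (\ref{eq14}), an element of $\bar{H}^{(0)}_n({\bf C}_*)$ comes from a cycle $(c_0,c_1)\in T^{(1)}_n({\bf C}_*)$. Writing out $d^{(1)}_n(c_0,c_1)=0$ and using (\ref{eq8})--(\ref{eq9}): the $s=0$ component forces each $c_0^\lambda$ to be a cycle in $C^\lambda_*$, and the $s=1$ component on $(\lambda_0,\lambda_1)$ gives, up to sign,
\[
p^{\lambda_0\lambda_1}\bigl(c_0^{\lambda_1}\bigr)-c_0^{\lambda_0}\;=\;\pm\,\partial^{\lambda_0}\bigl(c_1^{(\lambda_0,\lambda_1)}\bigr),
\]
i.e.\ $p^{\lambda_0\lambda_1}_*[c_0^{\lambda_1}]=[c_0^{\lambda_0}]$ in homology. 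Thus $\pi^{(1)}_n[(c_0,c_1)]=(c_0^\lambda)_\lambda$ lands in $\varprojlim H_n(C_*^\lambda)\subset\prod_\lambda H_n(C_*^\lambda)$. Conversely, any compatible family $([z^\lambda])\in\varprojlim H_n(C_*^\lambda)$ can be lifted to cycles $z^\lambda$ satisfying $p^{\lambda_0\lambda_1}z^{\lambda_1}-z^{\lambda_0}=\partial w^{(\lambda_0,\lambda_1)}$, and then $(z^\lambda,\pm w^{(\lambda_0,\lambda_1)})$ is a cycle in $T^{(1)}_n({\bf C}_*)$ mapping to $([z^\lambda])$. This yields a natural isomorphism
\[
\bar{H}^{(0)}_n({\bf C}_*)\;\cong\;\varprojlim H_n\bigl(C_*^\lambda\bigr).
\]

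\textbf{Step 3: Apply Lemma~\ref{lem.1} and compose.} Since $I$ is injective, Lemma~\ref{lem.1} with $r=0$ says that $\bar{\pi}^{(0)\infty}_n:\bar{H}^\infty_n({\bf C}_*)\to\bar{H}^{(0)}_n({\bf C}_*)$ is an isomorphism. Composing with the isomorphism from Step~2 gives the required isomorphism $\bar{H}^\infty_n({\bf C}_*)\cong\varprojlim H_n(C_*^\lambda)$. Finally I would check that this composite sends a class $[(z_0,z_1,\dots)]$ to $\{[z_0^\lambda]\}$: indeed, $\bar{\pi}^{(0)\infty}_n$ truncates to $(z_0,z_1)$ and the Step~2 isomorphism then reads off $([z_0^\lambda])_\lambda$, which is exactly the formula (\ref{eq31}).

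The only place that requires any real work is Step~2, where one must carefully unwind the definitions of $\partial_n$ and $\delta_n$ on $T^{(1)}_*({\bf C}_*)$ and verify that a compatible family of homology classes can always be lifted to a cycle of height one. Everything else is a bookkeeping composition of known facts, with the injectivity of $I$ used purely through Lemma~\ref{lem.1}.
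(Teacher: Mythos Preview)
Your proof is correct and follows essentially the same approach as the paper: factor $\tilde{\pi}^{(0)\infty}_n$ as the composite $\bar{H}^\infty_n({\bf C}_*)\xrightarrow{\bar{\pi}^{(0)\infty}_n}\bar{H}^{(0)}_n({\bf C}_*)\xrightarrow{\pi_n}\varprojlim H_n(C_*^\lambda)$, apply Lemma~\ref{lem.1} to the first map, and identify the second map as an isomorphism. The only difference is that the paper cites \cite[Lemma~17.7]{Mar} for the isomorphism $\pi_n:\bar{H}^{(0)}_n({\bf C}_*)\cong\varprojlim H_n(C_*^\lambda)$, whereas your Steps~1--2 supply a direct proof of that lemma by unwinding the cycle condition in $T^{(1)}_*$; one small imprecision is that in Step~3 the map $\bar{\pi}^{(0)\infty}_n$ truncates $(z_0,z_1,\dots)$ to $z_0$ (not to $(z_0,z_1)$), the resulting class $[z_0]$ lying in $\bar{H}^{(0)}_n$ precisely because $(z_0,z_1)$ is a height-one cycle.
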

\begin{proof} Note that the mapping (\ref{eq31}) can be considered as the composition of the following mappings:
\begin{equation}\label{eq32}
  \bar{H}^\infty_n\left({\bf C}_*\right) \os{\bar{\pi}^{(0)\infty}_n}{\rightarrow} \bar{H}^{(0)}_n\left({\bf C}_*\right) \os{{\pi_n}}{\rightarrow} {\varprojlim} {H}_n\left( {C_*^\lambda}\right),
\end{equation}
where the mapping 
\begin{equation}\label{eq33}
	\pi_n: \bar{H}^{(0)}_n\left({\bf C}_*\right) \os{{}}{\rightarrow} {\varprojlim} {H}_n\left( {C_*^\lambda}\right)
\end{equation}	
is given by 
  \begin{equation}\label{eq34}
	\pi_n \left(\left[z_0\right]\right)=\left\{\left[z_0^\lambda\right]\right\}.
\end{equation}
By \cite[Lemma 17.7]{Mar}, the mapping $\pi_n$ is an isomorphism. Hence, by Lemma 1, the mapping (\ref{eq30}) is an isomorphism as well.
\end{proof}

Let ${\bf C}^*=\left\{C^*_\lambda, p_{\lambda \lambda'}, \Lambda\right\}$ be a direct system of cochain complexes and the corresponding limit cochain complex
   \begin{equation}\label{eq35}
 	C^*= \varinjlim C^*_\lambda.
 \end{equation}
Therefore, there is a corresponding mapping ${\bf i}^\#=\left\{i_\lambda ^\#\right\}:\left\{C^*_\lambda , p_{\lambda \lambda '}, \Lambda\right\} \to C^*$, which induces the mapping ${\bf i}_\#=\left\{i^\lambda _\#\right\}:C_* \to \left\{C_*^\lambda , p_{\lambda \lambda '}, \Lambda\right\} $, where $C_*=C_*\left(\beta_\#\right)$ and $C_*^\lambda=C_*^\lambda\left(\beta_\#\right)$, i.e. 
 \begin{equation}\label{eq36}
 	C_*=C_*\left(\beta_\#\right)=Hom\left(C^n;I^0\right)\oplus Hom\left(C^{n+1};I^1\right),
 \end{equation}
 \begin{equation}\label{eq37}
	C^\lambda_*=C^\lambda_*\left(\beta_\#\right)=Hom\left(C^n _\lambda;I^0\right)\oplus Hom\left(C_\lambda^{n+1};I^1\right),
\end{equation}
where $0 \to G \os{\alpha}{\lra} I^0 \os{\beta}{\lra} I^1 \to 0 $ is an injective resolution of $G$.  Consequently, ${\bf i}_\#=\left\{i^\lambda _\#\right\}:C_* \to \left\{C_*^\lambda , p_{\lambda \lambda '}, \Lambda\right\} $ is an inverse limit. Indeed
\[	C_* = Hom\left(C^n ;I^0\right)\oplus Hom\left(C^{n+1};I^1\right)=Hom\left(\varinjlim C^n _\lambda;I^0\right)\oplus Hom\left(\varinjlim  C_\lambda^{n+1};I^1\right)=\]
 \begin{equation}\label{eq38}
	\varprojlim Hom\left( C^n _\lambda;I^0\right)\oplus \varprojlim Hom\left(C_\lambda^{n+1};I^1\right)=\varprojlim C_*^\lambda.
\end{equation}

\begin{lemma}\label{lem.2}  For each direct system ${\bf C}^*=\left\{C^*_\lambda, p_{\lambda \lambda'}, \Lambda\right\}$  of cochain complexes and an abelian group $G$, the mapping
	\begin{equation}\label{eq39}
		h_\#: \varprojlim C_*^\lambda \to T_*\left({\bf C}_*\right)
	\end{equation}
defined by
	\begin{equation}\label{eq40}
	h_n \left(\left\{c^\lambda _0\right\}\right)=h_n \left(c_0\right)=\left(c_0,0, \dots \right), ~\forall c_0=\left\{c_0^\lambda \right\} \in  \varprojlim C_n^\lambda
\end{equation}
is a chain map.
\end{lemma}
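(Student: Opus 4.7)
The plan is to verify that $h_{\#}$ commutes with boundaries, i.e.\ $d_n \circ h_n = h_{n-1} \circ \partial_n$, where the left-hand side uses the total complex boundary $d_n = \partial_n + (-1)^n \delta_n$ described in (\ref{eq7})--(\ref{eq9}), and the right-hand side uses the componentwise boundary on $\varprojlim C_{*}^\lambda$. Linearity of $h_n$ is immediate from the formula (\ref{eq40}), so the whole content is in checking the chain relation.

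I would evaluate $d_n(h_n(c))$ on a multiindex ${\boldsymbol \lambda} = (\lambda_0, \dots, \lambda_s) \in \Lambda_s$, using that by definition $h_n(c)({\boldsymbol \lambda}) = c^{\lambda_0}$ when $s=0$ and $h_n(c)({\boldsymbol \lambda}) = 0$ when $s \geq 1$. This splits naturally into three cases:
\begin{itemize}
\item For $s=0$: the $\delta_n$-contribution vanishes (there is no $\lambda_1$ in (\ref{eq9})), so $d_n(h_n(c))(\lambda_0) = \partial_n^{\lambda_0}(c^{\lambda_0})$, which matches $h_{n-1}(\partial_n(c))(\lambda_0)$ since $\partial_n$ on $\varprojlim C_*^\lambda$ acts componentwise.
\item For $s=1$: $\partial_n(h_n(c))(\lambda_0,\lambda_1) = \partial_{n+1}^{\lambda_0}(0) = 0$, while
\[
\delta_n(h_n(c))(\lambda_0,\lambda_1) = p_n^{\lambda_0 \lambda_1} h_n(c)(\lambda_1) - h_n(c)(\lambda_0) = p_n^{\lambda_0\lambda_1}(c^{\lambda_1}) - c^{\lambda_0}.
\]
Here is where the fact that $c = \{c^\lambda\}$ lies in the inverse limit $\varprojlim C_n^\lambda$ is used: the coherence condition $p_n^{\lambda_0\lambda_1}(c^{\lambda_1}) = c^{\lambda_0}$ forces this expression to vanish. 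On the other side, $h_{n-1}(\partial_n(c))(\lambda_0,\lambda_1) = 0$ by definition of $h_{n-1}$.
\item For $s \geq 2$: both $\partial_n(h_n(c))({\boldsymbol \lambda})$ and each $h_n(c)(d^j{\boldsymbol \lambda})$ (with $d^j {\boldsymbol \lambda} \in \Lambda_{s-1}$, $s-1 \geq 1$) are zero, so $d_n(h_n(c))({\boldsymbol \lambda}) = 0 = h_{n-1}(\partial_n(c))({\boldsymbol \lambda})$.
\end{itemize}

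The argument is almost entirely formal once the cases are laid out; the only nontrivial observation is the $s=1$ case, where the compatibility of $h_\#$ with the boundary is \emph{exactly} the inverse-limit coherence condition for $c$. This also explains why the formula $h_n(c) = (c, 0, \dots)$ makes sense only for limit elements: applied to an arbitrary coherent family one obtains a cycle-condition-like identity that would fail without the inverse-limit hypothesis. No real obstacle is expected beyond bookkeeping of indices and signs in (\ref{eq9}).
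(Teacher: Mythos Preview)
Your proposal is correct and follows essentially the same argument as the paper. The paper's proof is slightly more compressed: it simply notes that $\delta(c_0)(\lambda_0,\lambda_1)=p_n^{\lambda_0\lambda_1}(c^{\lambda_1})-c^{\lambda_0}=0$ by the inverse-limit coherence (your $s=1$ case), and then writes $h_n(\partial_n c_0)=(\partial c_0,0,\dots)=(\partial c_0,\delta c_0,0,\dots)=d_n(h_n c_0)$ in one line, leaving the $s=0$ and $s\geq 2$ verifications implicit.
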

\begin{proof} Note that, for each $c_0=\left\{c_0^\lambda\right\} \in \varprojlim C_*^\lambda$, we have $\delta_n (c_0)=0$. Indeed, by (\ref{eq9}), we have
\begin{equation}\label{eq41}
	\delta\left(c_0\right)\left(\lambda_0\lambda_1\right)=p_n^{\lambda_0\lambda_1}\left(c^{\lambda_1}\right)-c^{\lambda_0}=0.
\end{equation}
Therefore, we have
\begin{equation}\label{eq42}
	h_n\left(\partial_n (c_0)  \right)=\left(\partial(c_0),0, \dots \right)=(\partial(c_0),\delta(c_0), 0, \dots)=d_n \left(h_n\left(c_0\right)\right).
\end{equation}
Hence, $h_\#: \varprojlim C_*^\lambda \to T_*\left({\bf C}_*\right)$ is a chain map. 
\end{proof}

\begin{lemma}\label{lem.3}  For each direct system ${\bf C}^*=\left\{C^*_\lambda, p_{\lambda \lambda'}, \Lambda\right\}$  of cochain complexes and an injective  abelian group $I$, the mapping
	\begin{equation}\label{eq43}
		h_*: H_* \left(\varprojlim C_*^\lambda\right)  \to \bar{H}^\infty_* \left({\bf C}_*\right)
	\end{equation}
	induced by
	\begin{equation}\label{eq44}
		h_\#: \varprojlim C_*^\lambda \to T_*\left({\bf C}_*\right),
	\end{equation}
is an isomorphism.
\end{lemma}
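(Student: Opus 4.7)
The plan is to use Corollary~\ref{Cor.5} to reduce the lemma to showing that the canonical projection
\[
\rho_n : H_n\left(\varprojlim C_*^\lambda\right) \to \varprojlim H_n\left(C_*^\lambda\right),\qquad [c_0]\mapsto \{[c_0^\lambda]\},
\]
is an isomorphism when the coefficient group is injective. First I verify at the cycle level that $h_\#$ sends $c_0=\{c_0^\lambda\}\in\varprojlim C_n^\lambda$ to $(c_0,0,0,\dots)\in T_n({\bf C}_*)$, so by formula (\ref{eq31}) the composition $\tilde{\pi}^{(0)\infty}_n\circ h_*$ equals $\rho_n$. Since $\tilde{\pi}^{(0)\infty}_n$ is an isomorphism by Corollary~\ref{Cor.5}, bijectivity of $h_*$ is equivalent to bijectivity of $\rho_n$.

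To establish the latter, I apply the universal coefficient sequence (\ref{eq23}) separately to each $C^*_\lambda$ and to the limit complex $C^*=\varinjlim C^*_\lambda$, in each case with the injective coefficient group $I$. Since $\Ext(-;I)=0$, the sequence collapses to natural isomorphisms
\[
H_n\left(C_*^\lambda\right)\cong \Hom\left(H^n(C^*_\lambda);I\right),\qquad H_n\left(C_*\right)\cong \Hom\left(H^n(C^*);I\right).
\]
Combining this with formula (\ref{eq38}), which gives $\varprojlim C_*^\lambda = C_*$, and with the standard identifications
\[
\varprojlim \Hom\left(H^n(C^*_\lambda);I\right) \cong \Hom\left(\varinjlim H^n(C^*_\lambda);I\right) = \Hom\left(H^n(C^*);I\right)
\]
(the first by the Hom--colimit adjunction, the second because cohomology commutes with filtered colimits of cochain complexes), both the source and target of $\rho_n$ become canonically identified with $\Hom(H^n(C^*);I)$.

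The last step, which I expect to be the only non-formal point, is to check that under these identifications $\rho_n$ becomes the identity. This reduces to naturality of (\ref{eq23}) with respect to the limit projections $i_\#^\lambda : C_*\to C_*^\lambda$ and the colimit maps $H^n(C^*_\lambda)\to H^n(C^*)$, together with compatibility of the universal coefficient isomorphism with the $\Hom$--colimit adjunction. Both hold because the sequence (\ref{eq23}) is obtained from the explicit cone construction (\ref{eq4})--(\ref{eq5}) in \cite{BM3}, which is functorial in the cochain-complex argument. Granted naturality, $\rho_n$ is an isomorphism, and composition with Corollary~\ref{Cor.5} yields the lemma.
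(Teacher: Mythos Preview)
Your proof is correct and shares the paper's overall architecture: factor $h_*$ as the composition of the canonical projection $\rho_n:H_n(\varprojlim C_*^\lambda)\to\varprojlim H_n(C_*^\lambda)$ with $\tilde\pi^{(0)\infty}_n$, invoke Corollary~\ref{Cor.5} for the latter, and then show $\rho_n$ is an isomorphism.

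The difference lies in how you establish that $\rho_n$ is an isomorphism. The paper appeals to \cite[Theorem~4]{BM3}, which gives a long exact sequence relating $\bar H_n(\varinjlim C^*_\lambda;G)$ to the derived limits $\varprojlim^{(i)}H_n(C_*^\lambda)$; the vanishing of these derived limits for injective coefficients (already obtained in the proof of Lemma~\ref{lem.1}) then forces $\rho_n$ to be an isomorphism. You instead compute both the source and target of $\rho_n$ directly: the collapsed universal coefficient sequence (\ref{eq23}) identifies each side with $\Hom(H^n(C^*);I)$, via the $\Hom$--colimit adjunction and the exactness of filtered colimits (so that $H^n(\varinjlim C^*_\lambda)=\varinjlim H^n(C^*_\lambda)$). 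Your route is more elementary and self-contained, avoiding the external long exact sequence; the paper's route is more uniform with the machinery already set up in Lemma~\ref{lem.1} and reuses the vanishing result there. Your naturality check is the right thing to verify and follows, as you say, from the functoriality of the cone construction (\ref{eq4})--(\ref{eq5}).
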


\begin{proof}  Let's show that the composition
	\begin{equation}\label{eq45}
		H_* \left(\varprojlim C_*^\lambda\right)  \os{h_*}{\to} \bar{H}^\infty_* \left({\bf C}_*\right)  \os{\tilde{\pi}_*^{(0)\infty}}{\to} \varprojlim H_* \left( C^\lambda_*\right)
	\end{equation}
	is a natural mapping, i.e.
	\begin{equation}\label{eq46}
		\left( \tilde{\pi}_*^{(0)\infty} h_* \right) \left(\left[\left\{z_0^\lambda\right\}\right] \right)=\left\{[z_0^\lambda]\right\}, ~\forall \left[  \left\{ z_0^\lambda \right\} \right] \in H_n \left( \varprojlim C_*^\lambda \right).
	\end{equation}
Indeed, for each $\left[\left\{z_0^\lambda\right\}\right] \in H_n \left( \varprojlim C_*^\lambda \right)$, we have
	 \begin{equation}\label{eq47}
	 	\left( \tilde{\pi}_*^{(0)\infty} h_* \right) \left(\left[\left\{z_0^\lambda\right\}\right] \right)=
	 	\tilde{\pi}_*^{(0)\infty}\left[h_*  \left(\left\{z_0^\lambda \right\} \right)\right]=\tilde{\pi}_*^{(0)\infty}\left[ \left\{z_0^\lambda\right\}, 0, \dots  \right]=\left\{[z_0^\lambda]\right\}. 
	 \end{equation}
On the other hand, by  \cite[Theorem 4]{BM3},  for the direct system ${\bf C}^*=\left\{C^*_\lambda, p_{\lambda \lambda'}, \Lambda\right\}$, there is  a natural exact sequence
	\begin{equation}\label{eq48}
		\begin{tikzpicture}
			
			\node (A) {$\dots$};
			\node (B) [node distance=2cm, right of=A] {$ {\varprojlim} ^ {(3)}   \bar{H}^\lambda_{n+2} $};
			\node (C) [node distance=2.5cm, right of=B] {${\varprojlim} ^ {(1)}  \bar{H}^\lambda_{n+1} $};
			\node (D) [node distance=2.5cm, right of=C] {$ \bar{H}_n \left( {\varinjlim}   C^*_\lambda ;G \right) $};
			\node (E) [node distance=2.5cm, right of=D] {${\varprojlim}  \bar{H}^\lambda_{n} $};
			\node (F) [node distance=2cm, right of=E] {${\varprojlim} ^ {(2)}  \bar{H}^\lambda_{n} $};
			\node (H) [node distance=2cm, right of=F] {$\dots$~,};
			
			\draw[->] (A) to node [above]{}(B);
			\draw[->] (B) to node [above]{}(C);
			\draw[->] (C) to node [above]{}(D);
			\draw[->] (D) to node [above]{}(E);
			\draw[->] (E) to node [above]{}(F);
			\draw[->] (F) to node [above]{}(H);
			
		\end{tikzpicture}
	\end{equation}
	where $\bar{H}_*^\lambda=\bar{H}_* (C^*_\lambda ;G)=H_*(C_*^\lambda).$ Note that by (\ref{eq38}), we have
	\begin{equation}\label{eq49}
		\bar{H}_* (\varinjlim C^*_\lambda ;G) \simeq H_*\left( \varprojlim C_*^\lambda \right) .
	\end{equation}
     Therefore, there is a natural long exact sequence
		\begin{equation}\label{eq50}
		\begin{tikzpicture}
			
			\node (A) {$\dots$};
			\node (B) [node distance=2cm, right of=A] {$ {\varprojlim} ^ {(3)}   {H}_{n+2} \left(C_*^\lambda\right)$};
			\node (C) [node distance=2.8cm, right of=B] {${\varprojlim} ^ {(1)}  {H}_{n+1} \left(C_*^\lambda\right)$};
			\node (D) [node distance=2.5cm, right of=C] {$ {H}_n \left(\varprojlim C_*^\lambda\right) $};
			\node (E) [node distance=2.3cm, right of=D] {${\varprojlim}  {H}_{n}\left(C_*^\lambda\right) $};
			\node (F) [node distance=2.3cm, right of=E] {${\varprojlim} ^ {(2)}  {H}_{n}\left(C_*^\lambda\right) $};
			\node (H) [node distance=2cm, right of=F] {$\dots$~.};
			
			\draw[->] (A) to node [above]{}(B);
			\draw[->] (B) to node [above]{}(C);
			\draw[->] (C) to node [above]{}(D);
			\draw[->] (D) to node [above]{}(E);
			\draw[->] (E) to node [above]{}(F);
			\draw[->] (F) to node [above]{}(H);
			
		\end{tikzpicture}
	\end{equation}
Since, for each injective abelian group $I$, the homology groups ${\varprojlim} ^ {(i)}H_n\left(C_*^\lambda\right)$ are trivial, we obtain that the natural mapping  $	H_* \left(\varprojlim C_*^\lambda\right) \to  \varprojlim H_* \left( C^\lambda_*\right)$, which is {\color{black}the composition of the mappings}
\begin{equation}\label{eq51}
	H_* \left(\varprojlim C_*^\lambda\right)  \os{h_*}{\to} \bar{H}^\infty_* \left({\bf C}_*\right)  \os{\tilde{\pi}_*^{(0)\infty}}{\to} \varprojlim H_* \left( C^\lambda_*\right),
\end{equation}
is an isomorphism. On the other hand, by Lemma \ref{lem.1}, the mapping
  \begin{equation}\label{eq52}
	\tilde{\pi}^{(0)\infty}_n : \bar{H}^\infty_n\left({\bf C}_*\right) \to  \varprojlim H^{}_{n}\left( C_*^\lambda \right)
\end{equation}
is an isomorphism and therefore,
	\begin{equation}\label{eq53}
	h_*: H_* \left(\varprojlim C_*^\lambda\right)  \to \bar{H}^\infty_* \left({\bf C}_*\right)
\end{equation}
is an isomorphism as well.   
\end{proof}

\begin{lemma}\label{lem.4}  Each direct system ${\bf C}^*=\left\{C^*_\lambda, p_{\lambda \lambda'}, \Lambda\right\}$  of cochain complexes and  every short exact sequence of abelian groups
\begin{equation}\label{eq54}
	0 \to G \os{\varphi}{\lra} G_1 \os{\psi}{\lra} G_2 \to 0
\end{equation}
induces a long exact sequence
\begin{equation}\label{eq55}
\dots \os{\psi _*}{\lra} \bar{H}^\infty_{n+1}({\bf C}_*(\beta_{2\#})) \os{d_{n+1}}{\lra} \bar{H}^\infty_n({\bf C}_*(\beta_{\#})) \os{\varphi _*}{\lra} \bar{H}^\infty_n({\bf C}_*(\beta_{1\#})) \os{\psi _*}{\lra} \bar{H}^\infty_n({\bf C}_*(\beta_{2\#})) \os{d_n}{\lra} \dots~~.
\end{equation}
\end{lemma}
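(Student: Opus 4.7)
The plan is to construct a short exact sequence of total complexes from the given short exact sequence of coefficient groups and then apply the long exact homology sequence.

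First, I would invoke the horseshoe lemma to fit injective resolutions of $G$, $G_1$, and $G_2$ into a short exact sequence of injective resolutions that is degree-wise split:
\[
\begin{array}{ccccccccc}
0 \to & G & \os{\varphi}{\to} & G_1 & \os{\psi}{\to} & G_2 & \to 0 \\
 & \downarrow{\alpha} & & \downarrow{\alpha_1} & & \downarrow{\alpha_2} & \\
0 \to & I^0 & \to & I^0 \oplus I^0_2 & \to & I_2^0 & \to 0\\
 & \downarrow{\beta} & & \downarrow{\beta_1} & & \downarrow{\beta_2} & \\
0 \to & I^1 & \to & I^1 \oplus I^1_2 & \to & I_2^1 & \to 0
\end{array}
\]
Because each row involving the injectives is split, applying $\Hom(C^n_\lambda,-)$ preserves exactness degree-wise. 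Consequently, for each $\lambda \in \Lambda$, one obtains a short exact sequence of cone complexes
\[
0 \to C^\lambda_*(\beta_\#) \to C^\lambda_*(\beta_{1\#}) \to C^\lambda_*(\beta_{2\#}) \to 0,
\]
and these fit together into a short exact sequence of inverse systems of chain complexes
\[
0 \to {\bf C}_*(\beta_\#) \to {\bf C}_*(\beta_{1\#}) \to {\bf C}_*(\beta_{2\#}) \to 0.
\]

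Next, I would apply the total complex functor $T_*$. By formula (\ref{eq6}), $T_n({\bf C}_*)$ is defined as a product of the groups $C^{\lambda_0}_{n+s}$ over all multi-indices, and since the product functor on abelian groups is exact, the above short exact sequence yields a short exact sequence of chain complexes
\[
0 \to T_*({\bf C}_*(\beta_\#)) \to T_*({\bf C}_*(\beta_{1\#})) \to T_*({\bf C}_*(\beta_{2\#})) \to 0.
\]
Here one also checks that the maps induced on total complexes commute with the differentials $d_n = \partial_n + (-1)^n \delta_n$ defined in (\ref{eq7})--(\ref{eq9}); this is routine since the morphisms of inverse systems are induced by maps of coefficient groups that commute with the bonding maps $p_{\lambda\lambda'}$ and with the differentials $\partial$ of each $C^\lambda_*$.

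Finally, applying the classical long exact homology sequence to this short exact sequence of chain complexes and using the definition $\bar{H}^\infty_n({\bf C}_*(-_\#)) = H_n(T_*({\bf C}_*(-_\#)))$ from (\ref{eq10}) yields the desired long exact sequence (\ref{eq55}), with the connecting morphisms $d_n$ coming from the snake lemma. The main subtlety is the first step, namely verifying that the horseshoe construction can be chosen so that the middle injective resolution splits degree-wise as a direct sum, thereby ensuring the exactness of the induced sequence after applying $\Hom(C^n_\lambda, -)$; everything else (exactness of products, commutativity with the total differential, and the standard LES of homology) is formal.
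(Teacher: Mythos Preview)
Your proof is correct and follows essentially the same route as the paper: the paper cites \cite[Lemma~3]{BM3} to obtain the short exact sequence $0 \to C^\lambda_*(\beta_{\#}) \to C^\lambda_*(\beta_{1\#}) \to C^\lambda_*(\beta_{2\#}) \to 0$ for each $\lambda$ (which you unpack via the horseshoe lemma), then takes the product over all multiindices to get a short exact sequence of total complexes, and finally passes to the long exact homology sequence. Your explicit verification of the horseshoe splitting and compatibility with the total differential simply fills in details the paper leaves to the cited reference.
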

\begin{proof} Note that by \cite[Lemma 3]{BM3}, for each cochain complex $C^*_\lambda$, there is a short exact sequence 
\begin{equation}\label{eq56}
	0 \to C^\lambda_*(\beta_{\#}) \os{\varphi}{\lra} C^\lambda_*(\beta_{1 \#}) \os{\psi}{\lra} C^\lambda_*(\beta_{2\#})\to 0.
\end{equation}
Taking direct products of copies of (\ref{eq56}) over all multiindices ${\boldsymbol \lambda} \in \Lambda_s$ and all 
$s \ge 0$, one obtains exact sequences of groups,
\begin{equation}\label{eq57}
	0 \to T_n\left({\bf C}_*(\beta_{\#})\right) \os{\varphi}{\lra} T_n\left({\bf C}_*(\beta_{1\#})\right) \os{\psi}{\lra} T_n\left({\bf C}_*(\beta_{2\#})\right) \to 0.
\end{equation}
 Therefore, there is a short exact sequence of chain complexes
 \begin{equation}\label{eq58}
 	0 \to T_*\left({\bf C}_*(\beta_{2\#})\right) \os{\varphi}{\lra} T_*\left({\bf C}_*(\beta_{1\#})\right) \os{\psi}{\lra} T_*\left({\bf C}_*(\beta_{2\#})\right) \to 0,
 \end{equation}
which induces the sequence (\ref{eq55}).
\end{proof}

\begin{theorem}\label{Th.1}  For each direct system ${\bf C}^*=\left\{C^*_\lambda, p_{\lambda \lambda'}, \Lambda\right\}$  of cochain complexes and an abelian group $G$, the chain map 
\begin{equation}\label{eq59}
	h_\#: \varprojlim C_*^\lambda \to T_*\left({\bf C}_*\right)
\end{equation}
defined by
\begin{equation}\label{eq60}
	h_n \left(\left\{c^\lambda _0\right\}\right)=h_n \left(c_0\right)=\left(c_0,0, \dots \right), ~\forall c_0=\left\{c_0^\lambda \right\} \in  \varprojlim C_n^\lambda,
\end{equation}
induces the isomorphism
\begin{equation}\label{eq61}
	h_*: H_* \left(\varprojlim C_*^\lambda \right) \to \bar{H}_*^\infty \left({\bf C}_*\right).
\end{equation}
\end{theorem}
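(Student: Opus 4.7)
The plan is to bootstrap from Lemma~\ref{lem.3} (which establishes the isomorphism for injective coefficient groups) to an arbitrary abelian group $G$ via a Five Lemma argument. First, I would choose an injective resolution $0 \to G \os{\alpha}{\lra} I^0 \os{\beta}{\lra} I^1 \to 0$ with both $I^0$ and $I^1$ injective (possible since the category of abelian groups has injective dimension at most one). Treating this as a short exact sequence of coefficient groups and applying Lemma~\ref{lem.4} produces a long exact sequence linking $\bar{H}^\infty_*(\mathbf{C}_*(\beta_\#))$, $\bar{H}^\infty_*(\mathbf{C}_*(\beta_{1\#}))$, and $\bar{H}^\infty_*(\mathbf{C}_*(\beta_{2\#}))$, where $\beta_1$ and $\beta_2$ denote the differentials from the (trivial) injective resolutions of $I^0$ and $I^1$.

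Next, I would construct a parallel long exact sequence for the homology of the limit chain complexes $\varprojlim C_*^\lambda$. By identity (\ref{eq38}), one has $\varprojlim C_*^\lambda(\beta_{i\#}) \simeq C_*(\beta_{i\#})$, where $C^* = \varinjlim C_\lambda^*$ is the limit cochain complex. Applying the source of (\ref{eq56}) -- namely \cite[Lemma 3]{BM3} -- to $C^*$ yields a short exact sequence of chain complexes
\[
0 \to C_*(\beta_\#) \to C_*(\beta_{1\#}) \to C_*(\beta_{2\#}) \to 0,
\]
whose associated long exact homology sequence is the desired parallel sequence in $H_*(\varprojlim C_*^\lambda(-))$.

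Finally, I would assemble the two long exact sequences into a commutative ladder with vertical arrows $h_*$. Lemma~\ref{lem.3} identifies the four vertical arrows at terms with the injective coefficients $I^0$ and $I^1$ as isomorphisms, so the Five Lemma forces the middle map $h_*: H_*(\varprojlim C_*^\lambda(\beta_\#)) \to \bar{H}^\infty_*(\mathbf{C}_*(\beta_\#))$ to be an isomorphism as well. The main obstacle is verifying commutativity of the ladder squares that involve the connecting homomorphisms: this reduces to checking that $h_\#$ is compatible with the termwise short exact sequence (\ref{eq56}) applied to $C^*$ and the short exact sequence of total complexes (\ref{eq58}), so that both boundary maps trace through the same snake-lemma diagram. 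This should follow from the naturality of $h_\#$ of Lemma~\ref{lem.2} once the sign and shift conventions are matched.
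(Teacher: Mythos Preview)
Your proposal is correct and follows essentially the same route as the paper: both specialize the short exact coefficient sequence to an injective resolution $0\to G\to I^0\to I^1\to 0$, build the two long exact sequences from \cite[Lemma~3]{BM3} (via \eqref{eq38}) and from Lemma~\ref{lem.4}, and then apply Lemma~\ref{lem.3} together with the Five Lemma to the resulting ladder. The only difference is that you flag the commutativity of the connecting-homomorphism squares explicitly, whereas the paper subsumes this under the naturality of $h_\#$ from Lemma~\ref{lem.2}.
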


\begin{proof} Consider the limit cochain complex $C^*=\varinjlim C^*_\lambda.$ Then by the \cite[Lemma 3]{BM3}, for each short exact sequence of abelian groups
	\begin{equation}\label{eq62}
		0 \to G \os{\varphi}{\lra} G_1 \os{\psi}{\lra} G_2 \to 0,
	\end{equation}
	there is  a short exact sequence of chain complexes:
	\begin{equation}\label{eq63}
		0 \to C_*(\beta_{\#}) \os{\varphi}{\lra} C_*(\beta_{1 \#}) \os{\psi}{\lra} C_*(\beta_{2\#})\to 0.
	\end{equation}
Consequently, there is a natural long exact sequence:
\begin{equation}\label{eq64}
	\dots \os{\psi _*}{\lra} \bar{H}_{n+1}(C^*;G_2) \os{d_{n+1}}{\lra} \bar{H}_n({ C}^*;G) \os{\varphi _*}{\lra} \bar{H}_n({C}^*;G_1) \os{\psi _*}{\lra} \bar{H}_n({ C}^*;G_2) \os{d_n}{\lra} \dots~~.
\end{equation}	
By the 	(\ref{eq38}), $\bar{H}_n({C}^*;G)=H_n(C_*)=H_n(\varprojlim C_*^\lambda(\beta_\#))$ and so, the sequence (\ref{eq64}) can be re-written in the form:
\begin{equation}\label{eq65}
	\dots \os{\psi _*}{\lra} {H}_{n+1}\left(\varprojlim  C^\lambda_*\left(\beta_{2\#}\right)\right) \os{d_{n+1}}{\lra} {H}_n \left(\varprojlim  C^\lambda_*\left(\beta_{\#}\right)\right) \os{\varphi _*}{\lra} {H}_n \left(\varprojlim  C^\lambda_*\left(\beta_{1\#}\right)\right) \os{\psi _*}{\lra} {H}_n \left(\varprojlim  C^\lambda_*\left(\beta_{2\#}\right)\right) \os{d_n}{\lra} \dots~~.
\end{equation}		
By the 	Lemma \ref{lem.4}, we have the exact sequence (\ref{eq55}) as well. Consequently, by the Lemma \ref{lem.2}, the mapping (\ref{eq39}) induces the following commutative diagram:

\begin{equation}\label{eq66}
	\begin{tikzpicture}
		
		\node (A) {$\dots$};
		\node (B) [node distance=2cm, right of=A] {$ {H}_{n+1}\left(\varprojlim  C^\lambda_*\left(\beta_{1\#}\right)\right) $};
		\node (C) [node distance=3.2cm, right of=B] {$ {H}_{n+1}\left(\varprojlim  C^\lambda_*\left(\beta_{2\#}\right)\right)$};
		\node (D) [node distance=3.2cm, right of=C] {$ {H}_{n}\left(\varprojlim  C^\lambda_*\left(\beta_{\#}\right)\right) $};
		\node (E) [node distance=3.2cm, right of=D] {$ {H}_{n}\left(\varprojlim  C^\lambda_*\left(\beta_{1\#}\right)\right) $};
		\node (F) [node distance=3.2cm, right of=E] {$ {H}_{n}\left(\varprojlim  C^\lambda_*\left(\beta_{2\#}\right)\right) $};
		\node (H) [node distance=2cm, right of=F] {$\dots$~};
		
		\draw[->] (A) to node [above]{}(B);
		\draw[->] (B) to node [above]{}(C);
		\draw[->] (C) to node [above]{}(D);
		\draw[->] (D) to node [above]{}(E);
		\draw[->] (E) to node [above]{}(F);
		\draw[->] (F) to node [above]{}(H);

		\node (A) {$\dots$};
		\node (A1) [node distance=1.5cm, below of=A] {$\dots$};
		\node (B1) [node distance=1.5cm, below of=B] {$ \bar{H}^\infty_{n+1}({\bf C}_*(\beta_{1\#})) $};
		\node (C1) [node distance=1.5cm, below of=C] {$ \bar{H}^\infty_{n+1}({\bf C}_*(\beta_{2\#})) $};
		\node (D1) [node distance=1.5cm, below of=D] {$ \bar{H}^\infty_{n}({\bf C}_*(\beta_{\#})) $};
		\node (E1) [node distance=1.5cm, below of=E] {$ \bar{H}^\infty_{n}({\bf C}_*(\beta_{1\#})) $};
		\node (F1) [node distance=1.5cm, below of=F] {$ \bar{H}^\infty_{n}({\bf C}_*(\beta_{2\#})) $};
		\node (H1) [node distance=1.5cm, below of=H] {$\dots$~.};
		
		\draw[->] (A1) to node [above]{}(B1);
		\draw[->] (B1) to node [above]{}(C1);
		\draw[->] (C1) to node [above]{}(D1);
		\draw[->] (D1) to node [above]{}(E1);
		\draw[->] (E1) to node [above]{}(F1);
		\draw[->] (F1) to node [above]{}(H1);
		
		\draw[->] (B) to node [right]{$h_{1*}$}(B1);
		\draw[->] (C) to node [right]{$h_{2*}$}(C1);
		\draw[->] (D) to node [right]{$h_*$}(D1);
		\draw[->] (E) to node [right]{$h_{1*}$}(E1);
		\draw[->] (F) to node [right]{$h_{2*}$}(F1);
		
	\end{tikzpicture}
\end{equation}
On the other hand, if the sequence (\ref{eq62}) is an injective resolution of the group $G$, then by Lemma \ref{lem.3}, the homomorphisms 
\begin{equation}\label{eq67}
      	h_{1*}: {H}_{*}\left(\varprojlim  C^\lambda_*\left(\beta_{1\#}\right)\right)  \to \bar{H}^\infty_* \left({\bf C}_*(\beta_{1\#})\right)
\end{equation}	
and 
\begin{equation}\label{eq68}
	h_{2*}: {H}_{*}\left(\varprojlim  C^\lambda_*\left(\beta_{2\#}\right)\right)  \to \bar{H}^\infty_* \left({\bf C}_*(\beta_{2\#})\right)
\end{equation}	
are isomorphisms. Therefore, by the diagram (\ref{eq66}),
\begin{equation}\label{eq68}
	h_*: {H}_{*}\left(\varprojlim  C^\lambda_*\left(\beta_{\#}\right)\right)  \to \bar{H}^\infty_* \left({\bf C}_*(\beta_{\#})\right)
\end{equation}
is isomorphism as well.	
\end{proof}

\section{A new strong shape invariant homology theory}

In this section, according to \cite[\S 19.1]{Mar}, we define homology groups $ \bar{H}^\infty_* \left(X;G\right)$ of topological spaces, which are strong shape invariant, and find the connection with Alexsader Spanier normal cohomology groups. In particular, let ${\bf p}:X \to {\bf X}=\left\{X_\lambda, p_{\lambda \lambda ' }, \Lambda\right\} $  be a polyhedral resolution. Consider the direct system $\mathbf{C}^*_N=\left\{\bar{C}^*_N(X_\lambda; \mathbb{Z} ), p_{\lambda \lambda ' }, \Lambda\right\}$ of the cochain complexes, where  $\bar{C}^*_N(X_\alpha;\mathbb{Z})$ is the Alexander-Spanier cochain complex based on the normal coverings \cite{BBM}. Then, for each abelian group $G$, we have the corresponding inverse system $\mathbf{C}_*^N=\left\{\bar{C}_*^N(X_\lambda; G ), p_{\lambda \lambda ' }, \Lambda\right\}$  of chain complexes. Note that a chain complex  $\bar{C}_*^N(X_\lambda; G )$ is defined using the injective resolution  $0 \to G \os{\alpha}{\lra} I^0 \os{\beta}{\lra} I^1 \to 0 $ of $G$ as in \cite{BBM}. For simplicity, instead of $\mathbf{C}^*_N=\left\{\bar{C}^*_N(X_\lambda; \mathbb{Z} ), p_{\lambda \lambda ' }, \Lambda\right\}$  and  $\mathbf{C}_*^N=\left\{\bar{C}_*^N(X_\lambda; G ), p_{\lambda \lambda ' }, \Lambda\right\}$, we use the notations $\mathbf{C}^*_N=\left\{{C}^*_\lambda, p_{\lambda \lambda ' }, \Lambda\right\}$  and  $\mathbf{C}_*^N=\left\{{C}_*^\lambda, p_{\lambda \lambda ' }, \Lambda\right\}$, respectively.

Hence, for each topological space $X$ and an abelian group $G$, we have an inverse system  $\mathbf{C}_*^N=\left\{{C}_*^\lambda, p_{\lambda \lambda ' }, \Lambda\right\}$ of chain complexes. Consider the corresponding total complex $T_*({\bf C}_*^N)$. Then, the homology groups of the complex  $T_*({\bf C}_*^N)$ are considered as a homology groups of a topological space $X$, i.e
\begin{equation}\label{eq71}
	 \bar{H}^\infty_* \left(X;G\right)=H_*\left(T_*({\bf C}_*^N)\right).
\end{equation}
Note that the construction of the groups $\bar{H}^\infty_* \left(X;G\right)$ is absolutely the same as the construction of the strong homology groups of $X$ as in \cite{Mar}. The only deference is that instead of singular chain complexs $S_*(X_\lambda)$ of a space $X_\lambda$, we have considered the Alexsander Spanier chain complexes $\bar{C}_*^N(X_\lambda; G )$ defined in \cite{BBM}. Consequently, in the same way, we define the induced homomorphism for each strong shape morphism $F:X \to Y$ and for each continuous map $f: X \to Y$. As a result, we obtain that the constructed homology functor is strong shape invariant (cf. \cite[Corollary 19.2]{Mar}):

\begin{corollary}\label{Cor.2} The function which assigns the homology groups $\bar{H}^\infty_* \left(X;G\right)$ and the induced homomorphism $f_* : \bar{H}^\infty_* \left(X;G\right) \to \bar{H}^\infty_* \left(Y;G\right)$ to a space $X$ and a mapping $f:X \to Y$, respectively, is a strong shape invariant functor
	\begin{equation}\label{eq72}
		\bar{H}^\infty_* \left(-;G\right):  \mathcal{K}_{Top} \to  \mathcal{K}_{Ab}.
	\end{equation}
Therefore, if the mappings $f,g :X \to Y$ induce the same strong shape morphism, then
	\begin{equation}\label{eq73}
	f_*=g_* : \bar{H}^\infty_* \left(X;G\right) \to \bar{H}^\infty_* \left(Y;G\right).
\end{equation}
\end{corollary}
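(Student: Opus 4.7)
The plan is to transfer Mardesic's construction of strong homology in \cite[\S 19]{Mar} across to this setting, with the singular chain functor $S_*$ replaced throughout by the composite $X \mapsto \bar{C}_*^N(X; G)$ formed from Alexander-Spanier normal cochains and the fixed injective resolution $0 \to G \to I^0 \to I^1 \to 0$. All steps in Mardesic's argument depend only on the functoriality of the underlying chain functor on polyhedral maps, together with the total complex machinery already set up in Section 1; the particular geometric nature of singular chains is nowhere used. Consequently the entire construction carries over mechanically, and the statement becomes parallel to \cite[Corollary 19.2]{Mar}.

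Concretely, for a continuous $f : X \to Y$ I would fix polyhedral resolutions $\mathbf{p} : X \to \mathbf{X}$ and $\mathbf{q} : Y \to \mathbf{Y}$ and lift $f$ to a level morphism $\mathbf{f} : \mathbf{X} \to \mathbf{Y}$ in the pro-category (uniquely up to coherent homotopy, by the standard theory of polyhedral resolutions). Applying the contravariant normal cochain functor produces a morphism of direct systems $\mathbf{C}^*_N(\mathbf{Y}) \to \mathbf{C}^*_N(\mathbf{X})$; the cone construction $C_*(\beta_\#)$ of Section 1 turns this into a morphism of inverse systems of chain complexes $\mathbf{C}_*^N(\mathbf{X}) \to \mathbf{C}_*^N(\mathbf{Y})$; and the total complex map (\ref{eq10.1}) delivers a chain map $T_*(\mathbf{C}_*^N(\mathbf{X})) \to T_*(\mathbf{C}_*^N(\mathbf{Y}))$ whose induced map on homology is the desired $f_*$. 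Independence under the choices (resolutions and lifts) and strong-shape invariance then reduce to the single statement that coherent homotopies between pro-maps of polyhedral resolutions induce chain homotopies on the associated total complexes.

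This latter statement is exactly the coherence argument of \cite[\S 19.1]{Mar} applied verbatim: a coherent homotopy consists of higher compatibility data indexed by simplices of the nerve of the index poset, which is precisely the combinatorial data sitting in the coordinate $s$ of the total complex (\ref{eq6}); the required chain homotopy operator is built from these higher pieces by index-shifting, and the formulas differ from Mardesic's only by the substitution of $\bar{C}^N_*$ for $S_*$. The main technical obstacle is the careful bookkeeping needed to verify that this translation goes through without change, and this reduces to the routine naturality of $\bar{C}^N_*$ on polyhedral maps already recorded in \cite{BBM} and \cite{BM3}. Once this verification is in hand, a strong shape morphism $F : X \to Y$, being by definition an equivalence class of coherent pro-maps modulo coherent homotopy, yields a well-defined $F_*$, and $f_* = g_*$ whenever $f$ and $g$ represent the same strong shape morphism, giving both the functoriality of $\bar{H}^\infty_*(-;G)$ on the strong shape category and the displayed equality (\ref{eq73}).
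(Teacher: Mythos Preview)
Your proposal is correct and takes exactly the same approach as the paper: the paper provides no proof for this corollary at all, but simply remarks that the construction of $\bar{H}^\infty_*(X;G)$ is ``absolutely the same'' as Mardesic's strong homology with $S_*(X_\lambda)$ replaced by $\bar{C}_*^N(X_\lambda;G)$, and then cites \cite[Corollary 19.2]{Mar}. Your write-up is in fact a more detailed elaboration of precisely this transference argument than the paper itself offers.
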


\begin{theorem}\label{Th.2}  	For each topological space $X$ and abelian group $G$ there is an isomorphism
	\begin{equation}\label{eq74}
		\bar{H}_*^N\left(X;G\right) \simeq \bar{H}^{\infty}_*\left(X;G\right).
	\end{equation}
\end{theorem}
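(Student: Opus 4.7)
The plan is to reduce the desired isomorphism to Theorem \ref{Th.1} applied to the direct system $\mathbf{C}^*_N=\{C^*_\lambda,p_{\lambda\lambda'},\Lambda\}$ arising from a polyhedral resolution $\mathbf{p}:X\to \mathbf{X}$, and then identify the inverse limit of the associated chain complexes with the Alexander-Spanier normal chain complex of $X$ itself.

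First, by Theorem \ref{Th.1} applied to $\mathbf{C}^*_N$ and the abelian group $G$, the chain map
\[
h_\#:\varprojlim C_*^\lambda \to T_*(\mathbf{C}_*^N)
\]
of Lemma \ref{lem.2} induces an isomorphism
\[
h_*:H_*\bigl(\varprojlim C_*^\lambda\bigr)\;\xrightarrow{\ \simeq\ }\;\bar{H}^{\infty}_*(\mathbf{C}_*^N)=\bar{H}^{\infty}_*(X;G),
\]
the last equality being the definition (\ref{eq71}).

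Second, I would verify that the limit cochain complex $C^*=\varinjlim C^*_\lambda=\varinjlim \bar{C}^*_N(X_\lambda;\mathbb{Z})$ is naturally isomorphic to the Alexander-Spanier normal cochain complex $\bar{C}^*_N(X;\mathbb{Z})$ of $X$ itself. This is a continuity-type property of the normal Alexander-Spanier cochain functor with respect to polyhedral resolutions; it should follow from results of \cite{BBM} on normal coverings together with the fact that in a polyhedral resolution the bonding maps induce a cofinal system of normal coverings on $X$. Once this identification is in place, the computation (\ref{eq38}) gives
\[
\varprojlim C_*^\lambda=\varprojlim \bar{C}_*^N(X_\lambda;G)\simeq \bar{C}_*^N(X;G),
\]
so that, taking homology,
\[
H_*\bigl(\varprojlim C_*^\lambda\bigr)\simeq H_*\bigl(\bar{C}_*^N(X;G)\bigr)=\bar{H}_*^N(X;G).
\]

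Combining the two displayed isomorphisms yields $\bar{H}_*^N(X;G)\simeq \bar{H}_*^\infty(X;G)$, and naturality in $X$ follows from the naturality of $h_\#$ and of the continuity identification. The main obstacle is precisely the second step: proving that $\varinjlim \bar{C}^*_N(X_\lambda;\mathbb{Z})\cong \bar{C}^*_N(X;\mathbb{Z})$ for a polyhedral resolution. The cochain complex is built on normal coverings, so one must show that the normal coverings of $X$ are, up to refinement, obtained by pulling back normal coverings from the $X_\lambda$; this is where the full strength of the resolution property (as opposed to a mere inverse limit) is needed, and where invoking the relevant lemma from \cite{BBM} will be essential.
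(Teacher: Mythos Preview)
Your proposal is correct and follows essentially the same route as the paper: apply Theorem~\ref{Th.1} to the direct system $\mathbf{C}^*_N$ coming from a polyhedral resolution, then identify $\varinjlim \bar{C}^*_N(X_\lambda)$ with $\bar{C}^*_N(X)$ via the continuity property, and pass to chain complexes using (\ref{eq38}). The ``main obstacle'' you flag is exactly \cite[Theorem~4]{BBM}, which the paper invokes directly for the cochain isomorphism $\mathbf{p}^\#:\varinjlim \bar{C}^*_N(X_\lambda)\to \bar{C}^*_N(X)$; once you cite that result, no further argument about cofinality of pulled-back normal coverings is needed.
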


\begin{proof} By the \cite[Theorem 4]{BBM}, for each topologycal space $X$, there is a cochain isomorphism
	\begin{equation}\label{eq75}
		{\bf p}^\#: \varinjlim \bar{C}^*_N(X_\lambda) \to \bar{C}^*_N(X),
	\end{equation}
induced by a polyhedron resolution ${\bf p}:X \to {\bf X}=\left\{X_\lambda, p_{\lambda \lambda ' }, \Lambda \right\} $.
Therefore, by (\ref{eq38}), we have 
	\begin{equation}\label{eq75}
	\bar{H}_*^N(X;G) \simeq   \bar{H}_* \left( \varinjlim \bar{C}^*_N(X_\lambda); G \right) \simeq H_*\left( \varprojlim \bar{C}_*^N(X_\lambda; G) \right) .
\end{equation}
On the other hand, by Lemma \ref{lem.3}, we have
\begin{equation}\label{eq76}
	 H_*\left( \varprojlim \bar{C}_*^N(X_\lambda; G) \right)  \simeq H^\infty_* \left({\bf C}_*^N\right).
\end{equation}
Therefore, we obtain that
\begin{equation}\label{eq77}
	\bar{H}_*^N(X;G) \simeq H_*\left( \varprojlim \bar{C}_*^N(X_\lambda; G) \right) \simeq H^\infty_* \left({\bf C}_*^N\right)  \simeq H_* \left(T_* \left({\bf C}_*^N\right) \right) \simeq \bar{H}^{\infty}_*\left(X;G\right).
\end{equation}
\end{proof}

\begin{corollary}\label{Cor.3} The Alexander-Spannier Normal Homology Theory $\bar{H}_*^N\left(-, -;G\right)$ is a strong shape invariant.
\end{corollary}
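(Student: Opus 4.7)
The plan is to deduce the corollary directly from Theorem \ref{Th.2} together with Corollary \ref{Cor.2}, transferring strong shape invariance across a natural isomorphism. Strong shape invariance of a functor $H_\ast$ on $\mathcal{K}_{Top}$ amounts to the statement that continuous maps $f,g : X \to Y$ inducing the same strong shape morphism produce the same homomorphism $f_\ast = g_\ast$ on $H_\ast$, so the game is to pull this property back from $\bar{H}^\infty_\ast$ to $\bar{H}^N_\ast$.

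First I would fix, for each topological space $X$, a polyhedral resolution ${\bf p}: X \to {\bf X}$ and use it to define the isomorphism $\Phi_X : \bar{H}^N_\ast(X;G) \to \bar{H}^\infty_\ast(X;G)$ of Theorem \ref{Th.2} as the composite of ${\bf p}^{\#}$ from \cite[Theorem 4]{BBM}, the isomorphism (\ref{eq38}) expressing $\varprojlim C_\ast^\lambda$ as the chain complex attached to $\varinjlim \bar{C}^\ast_N(X_\lambda)$, and the isomorphism $h_\ast$ of Lemma \ref{lem.3}. The key observation then is that $\Phi_X$ is natural with respect to morphisms of polyhedral resolutions: each of the three constituent pieces is functorial in the pro-cochain complex ${\bf C}^\ast_N$, so a morphism of resolutions ${\bf f} : {\bf X} \to {\bf Y}$ covering a continuous map $f : X \to Y$ yields a commutative square relating $f_\ast$ on $\bar{H}^N_\ast$ to the induced map on $\bar{H}^\infty_\ast$ as defined in Section 2.

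Next I would invoke Corollary \ref{Cor.2}: if $f,g : X \to Y$ determine the same strong shape morphism $F : X \to Y$, then $f_\ast = g_\ast$ on $\bar{H}^\infty_\ast(-;G)$. Combining this with the naturality square for $\Phi$, the equality
\[
\Phi_Y \circ f_\ast^{N} = f_\ast^{\infty} \circ \Phi_X = g_\ast^{\infty} \circ \Phi_X = \Phi_Y \circ g_\ast^{N}
\]
holds, and since $\Phi_Y$ is an isomorphism we conclude $f_\ast^{N} = g_\ast^{N}$ on $\bar{H}^N_\ast(X;G) \to \bar{H}^N_\ast(Y;G)$. This is precisely the defining property of a strong shape invariant functor on $\mathcal{K}_{Top}$.

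The only genuinely delicate point, and the main obstacle, is verifying that $\Phi$ is natural not merely for morphisms of chosen resolutions but for all continuous maps $f : X \to Y$, i.e.\ that it is independent of the choice of resolutions in the appropriate sense. This reduces to the standard fact that any two polyhedral resolutions of the same space are isomorphic in pro-$HTop$ and that ${\bf p}^\#$ from \cite[Theorem 4]{BBM} together with the limit description (\ref{eq38}) is compatible with such comparison morphisms; I would cite this alongside \cite[\S 19.1]{Mar} rather than rework the construction. Once naturality is in place, the corollary follows immediately.
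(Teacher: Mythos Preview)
Your proposal is correct and follows the same route as the paper, which deduces Corollary~\ref{Cor.3} immediately from Theorem~\ref{Th.2} together with Corollary~\ref{Cor.2}; the paper gives no explicit proof beyond this juxtaposition, so you have in fact supplied the naturality verification that the paper leaves implicit. One small correction: the isomorphism $h_\ast$ for an arbitrary abelian group $G$ is Theorem~\ref{Th.1}, not Lemma~\ref{lem.3} (which treats only injective coefficients), though the paper itself cites Lemma~\ref{lem.3} in the proof of Theorem~\ref{Th.2}.
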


{\color{black} Note that by \cite[Corollary 9]{BBM}, the Alexander-Spanier normal} homology theory $\bar{H}_*^N(-,-;G)$, defined on the category $\mathcal{K}^2_{Top}$, satisfies $UCF$ property ({\it Universal Coefficient Formula}): for each $(X,A) \in \mathcal{K}^2_{Top}$ closed $P$-pair and an abelian group $G$, there exists a functorial exact sequence:
\begin{equation}\label{eq46}
	0 \lra \Ext\left(\bar{H}^{n+1}_N(X,A);G\right) \os{}{\lra}  \bar{H}_n^N\left(X,A;G\right) \os{}{\lra} \Hom(\bar{H}^n_N\left(X,A);G\right) \lra 0,
\end{equation}
where $	\bar{H}^{n+1}_N(-,-;G)$ is the Alexander-Spanier normal cohomology. On the other hand, by \cite[Theorem 4]{MP2} the Universal Coefficient Formula is valid for the strong homology theory $\bar{H}_*\left(-,-;G\right)$ on the category of pairs of compact topological spaces $\mathcal{K}^2_{C}$. Therefore, by the uniqueness theorem given by Berikashvili \cite{Be}, \cite{BM2}, the Alexander-Spanier normal homology theory $\bar{H}^N_{*}(-,-;G)$ \cite{BBM} is isomorphic to the strong homology theory $\bar{H}_{*}(-,-;G)$ on the category $\mathcal{K}^2_C$. On the other hand, for the strong homology theory  $\bar{H}_{*}(-,-;G)$ the Universal Coefficient Formula is false even for polyhedrons \cite[Remark 6]{MP2}. Therefore, in general, the homology theories $\bar{H}^N_{*}(-,-;G)$  and $\bar{H}_{*}(-,-;G)$  are different. 

\begin{corollary}\label{Cor.4} 		The Alexander-Spannier Normal Homology Theory $\bar{H}_*^N\left(-, -;G\right)$ is isomorphic to the strong homology theory $\bar{H}_*(- ,- ;G)$ on the category of pairs of compact topological spaces $\mathcal{K}^2_{C}$, but not in general. Therefore, there exist different exact strong invariant homology theories $\bar{H}_*^N\left(-, -;G\right)$ and $\bar{H}_*(- ,- ;G)$ on the category of pairs of topological spaces $\mathcal{K}^2_{Top}$.
\end{corollary}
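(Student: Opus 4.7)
The plan is to decompose the statement into two independent claims: (i) natural isomorphism of $\bar{H}^N_*(-,-;G)$ and $\bar{H}_*(-,-;G)$ on $\mathcal{K}^2_C$, and (ii) a counterexample witnessing non-isomorphism in general on $\mathcal{K}^2_{Top}$. The concluding ``therefore'' clause then follows by combining (i) and (ii) with Corollary~\ref{Cor.3}.

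For (i), I would invoke Berikashvili's uniqueness theorem \cite{Be}, \cite{BM2}: on $\mathcal{K}^2_C$, an exact homology functor satisfying the Eilenberg--Steenrod axioms together with the Universal Coefficient Formula is determined up to natural isomorphism by its coefficient group. Both candidates qualify on $\mathcal{K}^2_C$. The Alexander--Spanier normal theory is exact and satisfies the Eilenberg--Steenrod axioms (except dimension in the general case, which is irrelevant here since coefficients are fixed) by \cite{BBM}, and satisfies UCF by \cite[Corollary 9]{BBM}. The strong homology theory is exact on $\mathcal{K}^2_C$ by the classical construction and satisfies UCF on compact pairs by \cite[Theorem 4]{MP2}. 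Applying the uniqueness theorem with coefficients $G$ yields a natural isomorphism $\bar{H}^N_*(X,A;G) \simeq \bar{H}_*(X,A;G)$ for every $(X,A)\in\mathcal{K}^2_C$.

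For (ii), I would reproduce the counterexample of \cite[Remark 6]{MP2}. Take the infinite wedge $X=\bigvee_i X_i$ with $X_i=P^2$ the projective plane and $G=\mathbb{Z}_2$. Then $\check{H}^2(X;\mathbb{Z}_2)=\prod_i\mathbb{Z}_2$, $\check{H}^3(X;\mathbb{Z}_2)=0$, while $\bar{H}_2(X;\mathbb{Z}_2)=\bigoplus_i\mathbb{Z}_2$, and $\bigoplus_i\mathbb{Z}_2\not\simeq\Hom(\prod_i\mathbb{Z}_2,\mathbb{Z}_2)$, so the UCF fails for strong homology. On the other hand, since $\bar{H}^N_*$ does satisfy UCF by \cite[Corollary 9]{BBM}, the group $\bar{H}^N_2(X;\mathbb{Z}_2)$ must equal $\Hom(\bar{H}^2_N(X);\mathbb{Z}_2)$ up to the $\Ext$ term, and in particular cannot coincide with $\bar{H}_2(X;\mathbb{Z}_2)=\bigoplus_i\mathbb{Z}_2$. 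Hence the two theories differ on $X$, proving the non-isomorphism on $\mathcal{K}^2_{Top}$.

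To close, combining (i) and (ii) with Corollary~\ref{Cor.3} (which gives strong shape invariance of $\bar{H}^N_*$ through Theorem~\ref{Th.2}) and with the classical strong shape invariance of $\bar{H}_*$ \cite{LM1}, \cite{Mar}, we obtain two distinct strong shape invariant exact homology theories on $\mathcal{K}^2_{Top}$ agreeing on $\mathcal{K}^2_C$. I expect the main obstacle to be not in the logical structure, which is mostly a chain of citations, but in matching conventions: specifically, verifying that the Alexander--Spanier normal cohomology $\bar{H}^n_N$ used in UCF coincides with \v{C}ech cohomology on the wedge of projective planes (so the quantitative comparison on the counterexample is legitimate), and that Berikashvili's uniqueness hypotheses in \cite{Be}, \cite{BM2} are literally those established for $\bar{H}^N_*$ in \cite{BBM}.
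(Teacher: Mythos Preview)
Your proposal is correct and follows essentially the same approach as the paper: both parts rely on exactly the citations you name---Berikashvili's uniqueness theorem \cite{Be}, \cite{BM2} together with UCF from \cite[Corollary 9]{BBM} and \cite[Theorem 4]{MP2} for the isomorphism on $\mathcal{K}^2_C$, and the failure of UCF for strong homology from \cite[Remark 6]{MP2} contrasted with \cite[Corollary 9]{BBM} for the non-isomorphism in general. The paper presents this as a short paragraph of citations immediately preceding the corollary rather than as a formal proof, and does not spell out the explicit group comparison at the wedge of projective planes as you do, but the logical skeleton is identical.
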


\section*{Acknowledgements}

The work was supported by Shota Rustaveli National Science Foundation of Georgia (SRNSF grant FR-23-271).\\

We extend our sincere thanks to the reviewers for their valuable comments and suggestions, which have enhanced the exploration of our results.


\bibliographystyle{elsarticle-num}

\end{document}